\newtheorem{theorem}{Theorem}[section]
\newtheorem{definition}[theorem]{Definition}
\newtheorem{lemma}[theorem]{Lemma}
\newenvironment{proof}[1][Proof]{\textbf{#1.} }{\ \rule{0.5em}{0.5em}}
\newcommand{\rmd}{{\mathrm d}}
\newcommand{\dN}{{\bf N}}
\newcommand{\dR}{{\bf R}}
\newcommand{\ep}{\varepsilon}
\newcounter{figurecounter}
\begin{document}

\title{Logit Equilibrium as an Approximation of Nash Equilibrium%
\thanks{E. Solan acknowledges the support of the Israel Science Foundation, grant \#217/17.}}

\author{Eilon Solan and Omri N.~Solan%
\thanks{The School of Mathematical Sciences, Tel Aviv
University, Tel Aviv 6997800, Israel. e-mail: eilons@post.tau.ac.il (corresponding author), omrisola@post.tau.ac.il.}}

\maketitle

\begin{abstract}
We prove that the graph of the logit equilibrium correspondence is
a smooth manifold, which uniformly approximates the graph of the Nash equilibrium manifold.
\end{abstract}

\noindent\textbf{Keywords:} Nash equilibrium, logit equilibrium, smooth manifold.

\section{Introduction}

Kohlberg and Mertens (1986, Theorem~1) showed that the graph of the Nash equilibrium correspondence is homeomorphic to the set of payoff functions.
Ritzberger (1994, Proposition~2) proved that the graph of the Nash equilibrium correspondence can be uniformly approximated by a smooth manifold.
In this note we provide a specific smooth manifold that uniformly approximates the graph of the Nash equilibrium correspondence,
namely, the graph of the logit equilibrium correspondence, a solution concept that was defined by
McKelvey and Palfrey (1995).

The significance of this result stems from the need to apply topological results to the graph of the Nash equilibrium correspondence.
Various topological results are proven for smooth manifolds.
Since the graph of the Nash equilibrium correspondence is not a smooth manifold, these results cannot be applied to this graph.
The ability to apply these results to a smooth manifold that uniformly approximates the graph of the Nash equilibrium correspondence may be sufficient for various proofs.
Though by Ritzberger's (1994) result it is known that the graph of the Nash equilibrium correspondence can be approximated
by \emph{some} smooth manifold, the logit equilibrium has the additional advantage that it is completely mixed,
a property that can be sometimes useful, see, e.g, Solan and Solan (2018).

\section{The Model and Main Result}

A \emph{strategic game form} is a pair $(I,A)$
where $I = \{1,2,\ldots,d\}$ is a finite set of players and $A = \times_{i \in I} A_i$ is the Cartesian product of finite sets of \emph{pure strategies} for the players.
A \emph{payoff function for player~$i$} for the strategic game form $(I,A)$ is a function $u_i : A \to \dR$,
and a \emph{payoff function} is a collection $u = (u_i)_{i \in I}$ of payoff functions for the players.
Consequently, the set of all payoff functions is equivalent to $\dR^{A \times I}$.
A triplet $(I,A,u)$ where $u$ is a payoff function for the strategic game form $(I,A)$ is a \emph{game}.

A \emph{mixed strategy} for player~$i$ is a probability distribution $x_i \in\Delta(A_i)$,
and a \emph{mixed strategy profile} is a collection $x = (x_i)_{i \in I}$ of mixed strategies for the players.
It follows that the set of all mixed strategy profiles
%\footnote{When writing $\cup_{i \in I} A_i$ we implicitly assume that the sets of pure %strategies of the players are disjoint.}
is $X := \times_{i \in I} \Delta(A_i) \subset \dR^{\cup_{i \in I} A_i}$.
A payoff function $u_i$ for player~$i$ is extended to a function from $X$ to $\dR$ in a multilinear fashion.

A mixed strategy profile $x \in X$ is a (Nash) \emph{equilibrium} of the game $(I,A,u)$ if $u_i(x) \geq u_i(a_i,x_{-i})$
for every player $i \in I$ and every pure strategy $a_i \in A_i$.
When the strategic game form is fixed,
the graph of the Nash equilibrium correspondence is the collection of all pairs of a payoff function and equilibrium in the game induced by this payoff function.

\begin{definition}
Let $(I,A)$ be a strategic game form.
The \emph{graph of the Nash equilibrium correspondence} of $(I,A)$ is the set
\[ M := \left\{ (u,x) \in \dR^{A\times I} \times X \colon x \hbox{ is an equilibrium of } (I,A,u)\right\} \subset \dR^{A \times I} \times \dR^{\cup_{i \in I}A_i}. \]
\end{definition}

As mentioned above, Kohlberg and Mertens (1986) proved that the set $M$ is homeomorphic to the set of games, namely, to $\dR^{A \times I}$.
An important concept that we will need is that of logit equilibrium, which we define now.

\begin{definition}[McKelvey and Palfrey, 1995]
Let $(I,A,u)$ be a game and let $n > 0$.
The mixed strategy profile $x$ is a \emph{logit equilibrium with parameter $n$} of the game $(I,A,u)$ if for every player $i \in I$ and every pure strategy $a_i \in A_i$,
\begin{equation}
\label{equ:60}
x_i(a_i) = \frac{\exp(nu_i(x,a_i))}{\sum_{a'_i \in A_i}\exp(nu_i(x,a'_i))}.
\end{equation}
\end{definition}

Standard continuity arguments show that a limit of logit equilibria with parameter $n$ as $n$ goes to infinity is a Nash equilibrium, see McKlevey and Palfrey (1995, Theorem 2).

\begin{definition}
Let $(I,A)$ be a strategic game form.
For every real number $n$,
the \emph{graph of the logit equilibrium correspondence} of $(I,A)$ is the set
\[ M_n := \left\{ (u,x) \colon x \hbox{ is a logit equilibrium with parameter } n \hbox{ in } (I,A,u)\right\} \subset \dR^{A \times I} \times \dR^{\cup_{i \in I}A_i}. \]
\end{definition}

Our first main result is that the graph of the logit correspondence is a smooth manifold.
\begin{theorem}
\label{theorem:1}
The set $M_n$ is a smooth manifold of dimension $|A| \times |I|$.
\end{theorem}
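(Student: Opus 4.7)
The plan is to apply the regular value theorem to a smooth map whose zero set is exactly $M_n$. First, since the right-hand side of \eqref{equ:60} is strictly positive and sums to one over $a_i\in A_i$, the logit equations automatically force $x$ to lie in the relative interior $X^\circ$ of $X$; hence $M_n\subset\dR^{A\times I}\times X^\circ$, and $X^\circ$ is a smooth manifold of dimension $\sum_{i}(|A_i|-1)$. Next, I would remove the redundancy caused by the common denominator in \eqref{equ:60} by fixing, for each player $i$, a reference action $a_i^*\in A_i$, taking logarithms, and subtracting the equation for $a_i^*$. This converts the logit conditions into
\[ G_{i,a_i}(u,x)\;:=\;\log x_i(a_i)-\log x_i(a_i^*)-n\bigl(u_i(x,a_i)-u_i(x,a_i^*)\bigr)=0\]
for $i\in I$ and $a_i\in A_i\setminus\{a_i^*\}$, which I assemble into a smooth map $G:\dR^{A\times I}\times X^\circ\to\dR^{\sum_i(|A_i|-1)}$ with $M_n=G^{-1}(0)$.

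The hard part is to show that $DG$ is surjective at every $(u,x)\in M_n$; this is essentially the only nontrivial computation. In fact, already $\partial G/\partial u$ suffices, and it is block-diagonal across players since the $i$-th block of $G$ depends only on $u_i$. Differentiating and using the multilinearity of $u_i$,
\[\frac{\partial G_{i,a_i}}{\partial u_i(b)}=\begin{cases}-n\,x_{-i}(b_{-i}),&b_i=a_i,\\ +n\,x_{-i}(b_{-i}),&b_i=a_i^*,\\ 0,&\text{otherwise.}\end{cases}\]
Pick any $a_{-i}^0\in A_{-i}$; since $x\in X^\circ$ we have $x_{-i}(a_{-i}^0)=\prod_{j\neq i}x_j(a_j^0)>0$. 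Restricting the perturbation to coordinates $u_i(a_i,a_{-i}^0)$ with $a_i\in A_i\setminus\{a_i^*\}$ yields a diagonal submatrix with nonzero entries $-n\,x_{-i}(a_{-i}^0)$, which proves surjectivity of $\partial G/\partial u$, and hence of $DG$.

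By the regular value theorem, $M_n$ is a smooth submanifold of $\dR^{A\times I}\times X^\circ$ of codimension $\sum_{i}(|A_i|-1)$, hence of dimension $|A|\cdot|I|+\sum_{i}(|A_i|-1)-\sum_{i}(|A_i|-1)=|A|\cdot|I|$, as claimed. Equivalently, for each fixed $x\in X^\circ$ the logit equations cut out an affine subspace of $\dR^{A\times I}$ whose defining linear system has constant rank $\sum_i(|A_i|-1)$, so $M_n$ is naturally a smooth affine bundle over $X^\circ$ of the stated dimension. The key features that make the argument go through are the multilinearity of the expected payoff in $u$, which linearises the log-form equations in $u$, and the complete mixedness of any $x$ appearing in $M_n$, which guarantees that the perturbations above are nondegenerate.
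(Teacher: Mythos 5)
Your proof is correct, but it takes a genuinely different route from the paper. You exhibit $M_n$ as the zero set of the log-odds map $G$ on $\dR^{A\times I}\times X^\circ$ and verify that $0$ is a regular value by perturbing, for each player $i$, only the payoffs $u_i(\cdot,a_{-i}^0)$ along a single opponent profile; complete mixedness makes the resulting diagonal block $-n\,x_{-i}(a_{-i}^0)\,\mathrm{Id}$ nonsingular, and the dimension count $|A|\cdot|I|+\sum_i(|A_i|-1)-\sum_i(|A_i|-1)=|A|\cdot|I|$ is right. The paper instead follows Kohlberg and Mertens: it splits $u=\langle \widetilde u,\overline u\rangle$ and builds an explicit map $\varphi_n(u,x)=\langle \widetilde u, z_n(u,x)\rangle$ into $\dR^{A\times I}$, which Lemma~\ref{lemma:g} shows is a bijective immersion, so that $M_n$ is globally parametrized by $\dR^{A\times I}$. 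The trade-off: your argument is a shorter, standard application of the regular value theorem and fully proves Theorem~\ref{theorem:1}, including the observation (which the paper does not make explicit) that $M_n$ is an affine bundle over $X^\circ$; the paper's construction does more work here but yields a global diffeomorphism $M_n\cong\dR^{A\times I}$ whose inverse is exactly the object needed in Theorem~\ref{theorem:2}, so your approach would still leave that construction to be done separately.
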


Our second main result is that the graph of the logit correspondence uniformly approximates the graph of the Nash equilibrium correspondence.
\begin{theorem}
\label{theorem:2}
There are a function $\varphi : M \to \dR^{A \times I}$,
and for every $n \in \dN$ there is a smooth function $\varphi_n : M_n \to \dR^{A \times I}$
that satisfy the following property:
For every $\ep > 0$ there is $N = N(\ep) > 0$ such that
for every $n \geq N$ we have
\[ \|\varphi^{-1}(y) - (\varphi_n)^{-1}(y)\|_2 \leq \ep, \ \ \ \forall y \in \dR^{A \times I}. \]
\end{theorem}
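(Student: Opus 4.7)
The strategy is to construct $\varphi$ and $\varphi_n$ so that their inverses arise from analogous fixed-point constructions, differing only in whether the best-response or the logit-response is used. The required uniform approximation then reduces to a compatible form of convergence of logit-response to best-response as $n\to\infty$.

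For $\varphi$, the plan is to use a Kohlberg--Mertens-type homeomorphism. Fix a smooth embedding $c : X \to \dR^{A\times I}$ of the product of simplices into the payoff space. Given $y \in \dR^{A\times I}$, seek $x = x(y) \in X$ that is a Nash equilibrium of the modified game $y - c(x)$, and define $\varphi^{-1}(y) := (y - c(x(y)), x(y)) \in M$. With a well-chosen $c$ (for instance a sufficiently large scalar multiple of a diagonal embedding, so that the modified payoffs are strictly concave in each player's own strategy), this fixed-point equation has a unique solution and $\varphi$ is a well-defined bijection. For $\varphi_n$, I repeat the construction but replace the Nash equilibrium condition by the logit equilibrium condition~(\ref{equ:60}) at parameter $n$: find the unique logit equilibrium $x^n(y)$ of $y - c(x)$ and set $\varphi_n^{-1}(y) := (y - c(x^n(y)), x^n(y)) \in M_n$. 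Smoothness of $\varphi_n$ is immediate from smoothness of the logit equation together with the implicit function theorem, and Theorem~\ref{theorem:1} guarantees that $M_n$ has the right dimension for $\varphi_n$ to be a diffeomorphism.

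For pointwise approximation, write $(u,x) := \varphi^{-1}(y)$ and $(u^n, x^n) := \varphi_n^{-1}(y)$. Since logit equilibria converge to Nash equilibria (McKelvey and Palfrey, 1995, Theorem~2), standard continuity arguments yield $x^n(y) \to x(y)$ for each fixed $y$; then $u - u^n = c(x^n) - c(x) \to 0$ by continuity of $c$.

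The main obstacle is obtaining this convergence \emph{uniformly} in $y \in \dR^{A\times I}$. The logit-to-best-response convergence is uniform only on compact sets where the best response is single-valued, whereas $\dR^{A\times I}$ is noncompact and contains games with tied best responses. To overcome this, $c$ must be chosen strongly enough that the fixed-point map sending $x$ to the equilibrium of $y - c(x)$ is uniformly contractive in $x$, with Lipschitz constant independent of $y$ and of $n$. With such an estimate in hand, the pointwise convergence $x^n(y) \to x(y)$ transfers to uniform convergence: the Lipschitz inverse of the fixed-point map controls $\|x^n(y) - x(y)\|_2$ in terms of the softmax-to-argmax discrepancy, which is uniformly $O(1/n)$ once the regularizing effect of $c$ is strong enough. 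Establishing this uniform stability estimate is where the bulk of the proof lies.
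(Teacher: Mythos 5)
Your overall architecture is the same as the paper's: take $\varphi$ to be the Kohlberg--Mertens homeomorphism, take $\varphi_n$ to be its logit analogue, and reduce the theorem to a uniform comparison of the two inverses. The gap is that the one step you defer --- the uniform-in-$y$ stability estimate --- is exactly where the entire content of the theorem lives, and the mechanism you sketch for it would not work. You propose to control $\|x^n(y)-x(y)\|$ by a uniform contraction constant times the ``softmax-to-argmax discrepancy,'' which you claim is uniformly $O(1/n)$ once $c$ is strongly regularizing. But that discrepancy is \emph{not} uniformly small: whenever the effective payoff vector $u_i(\cdot,x_{-i})$ has ties, the softmax splits mass roughly evenly while the argmax does not, a $\Theta(1)$ difference no matter how large $n$ is. Such ties cannot be eliminated by any choice of $c$, because indifference among the actions in the support is the defining property of a mixed equilibrium; the set of $y$ producing mixed equilibria is unavoidable and unbounded. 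So the quantity you reduce to is not uniformly $O(1/n)$, and the reduction fails precisely at the points where uniformity is in doubt.

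The paper's resolution (Lemma~\ref{lemma:g2}) is to compare the inverse of the logit-perturbed map $g^{(n)}$ not with a best-response/argmax object but with the map $h_i(y)=\min\{y_i,\alpha^*\}$, whose correction terms $(y_i-\alpha^*)_+$ form the Euclidean projection onto the simplex; this limit is single-valued and continuous even at ties. The uniform bound is then obtained by an explicit elementary argument: the corrections $\alpha_i^{(n)}=y_i-x_i^{(n)}$ are nonnegative, sum to $1$, and respect the ordering of the $y_i$, so if some $\alpha_i^{(n)}\geq\alpha_i+\ep$ then $x_i^{(n)}\leq x_d^{(n)}-\ep$, forcing $\alpha_i^{(n)}\leq 1/(1+\exp(\ep n))<\ep$, a contradiction; summing the resulting one-sided bounds gives $\|h^{(n)}(y)-h(y)\|_\infty\leq d\ep$ independently of $y$. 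That computation, or something equivalent, is what your proposal is missing. A secondary issue: you need $(\varphi_n)^{-1}(y)$ to be a well-defined single point for \emph{every} $y\in\dR^{A\times I}$, i.e.\ $\varphi_n$ must be a global bijection; the implicit function theorem plus Theorem~\ref{theorem:1} only gives local invertibility. The paper secures global bijectivity separately in Lemma~\ref{lemma:g} (the CL-matrix argument for injectivity and the open-and-closed-image argument for surjectivity), and your proof would need an analogous step.
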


\section{Proofs}

To prove that $M_n$ is a smooth manifold we need to study a certain function that will be used in the definition of
the immersion
between $M_n$ and $\dR^{A \times I}$.
Recall that an \emph{immersion} is a differentiable function between differentiable manifolds whose derivative is everywhere injective (one-to-one).
The keen reader will identify the origin of this function
and the proof of Theorem~\ref{theorem:1} below in the work of Kohlberg and Mertens (1986).

\begin{lemma}
\label{lemma:g}
For every $n > 0$ define the function $g^{(n)} : \dR^d \to \dR^d$ by
\[ g^{(n)}_i(x) = x_i + \frac{\exp(nx_i)}{\sum_{j=1}^d \exp(n x_j)}, \ \ \ \forall i\in \{1,2,\cdots,d\}. \]
The function $g^{(n)}$ is one-to-one, onto, and an immersion.
\end{lemma}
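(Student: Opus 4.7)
The plan is to compute $Dg^{(n)}$ explicitly, observe that it is uniformly positive definite on all of $\dR^d$, and deduce all three properties from this single fact.

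First I would compute the partial derivatives. Writing $s_i := \exp(nx_i)/\sum_{j} \exp(nx_j)$ for the softmax weights at $nx$, a routine differentiation gives
\[
\frac{\partial g^{(n)}_i}{\partial x_j}(x) = \delta_{ij} + n\, s_i(\delta_{ij} - s_j),
\]
so in matrix form $Dg^{(n)}(x) = I + n\bigl(\mathrm{diag}(s) - s s^\top\bigr)$. The quadratic form $v^\top\bigl(\mathrm{diag}(s) - s s^\top\bigr)v = \sum_i s_i v_i^2 - \bigl(\sum_i s_i v_i\bigr)^2$ is the variance of the coordinates of $v$ under the probability vector $s$, hence nonnegative. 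Therefore $Dg^{(n)}(x) \succeq I$ uniformly in $x$, which immediately yields the immersion property.

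For injectivity of $g^{(n)}$ itself I would use a line-integral argument: assuming $g^{(n)}(x) = g^{(n)}(y)$, set $M := \int_0^1 Dg^{(n)}\bigl(x + t(y-x)\bigr)\,dt$; the fundamental theorem of calculus gives $M(y-x) = 0$, and since each integrand is $\succeq I$ the integral $M$ is itself $\succeq I$ and hence invertible, forcing $y = x$. For surjectivity I would show that $\image(g^{(n)})$ is both open and closed in $\dR^d$ and invoke connectedness. Openness follows from the inverse function theorem applied at each point. Closedness uses that $s$ lies in $[0,1]^d$: if $g^{(n)}(x_k) \to y^*$ then $x_k = g^{(n)}(x_k) - s(x_k)$ is bounded, so a subsequence converges to some $x^*$ with $g^{(n)}(x^*) = y^*$ by continuity.

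The main obstacle is really just identifying the correct algebraic structure of $Dg^{(n)}$: once one recognizes $\mathrm{diag}(s) - s s^\top$ as the covariance matrix of a categorical distribution (hence positive semidefinite), all three conclusions follow by standard machinery. An alternative to the line-integral step would be to combine the local-diffeomorphism property with properness via Hadamard's global inverse function theorem, but the argument above keeps the proof entirely self-contained.
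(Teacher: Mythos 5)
Your proof is correct, and its overall architecture coincides with the paper's: invertibility of the Jacobian everywhere gives the immersion property, a line-integral (mean-value) argument applied to the averaged Jacobian gives injectivity, and surjectivity follows from the image being open (inverse/open mapping theorem) and closed (boundedness of the perturbation term $s \in [0,1]^d$, so preimages of convergent sequences are bounded). Where you genuinely diverge is in the key linear-algebra ingredient. The paper never symmetrizes: it observes that the Jacobian has positive diagonal, negative off-diagonal entries, and column sums identically equal to $1$ (because $\sum_i g^{(n)}_i(x) = 1 + \sum_i x_i$), i.e.\ it is column strictly diagonally dominant, and invokes the Levy--Desplanques theorem; this class is closed under averaging, which is what makes the line-integral step go through. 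You instead exploit the fact that the softmax Jacobian is $n\bigl(\mathrm{diag}(s) - s s^\top\bigr)$, a categorical covariance matrix, hence symmetric positive semidefinite, giving $Dg^{(n)} \succeq I$ uniformly; the class of matrices with quadratic form bounded below by $\|v\|_2^2$ is likewise closed under averaging, so your injectivity step is equally sound (and does not even need symmetry, since $v^\top M v > 0$ for all $v \neq 0$ already forces $Mv \neq 0$). Your route buys a quantitative bound ($g^{(n)}$ is a uniform expansion, $\|g^{(n)}(x)-g^{(n)}(y)\|_2 \geq \|x-y\|_2$, which would also let you conclude surjectivity via properness and Hadamard's theorem, as you note); the paper's route is the one that generalizes to the non-symmetric Jacobians arising in the game-theoretic application, which is presumably why the authors phrase it in terms of CL-matrices.
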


\begin{proof}

\noindent\textbf{Step 1:} \emph{The function $g^{(n)}$ is an immersion.}

An $n \times n$ matrix $A$ is a \emph{CL-matrix} if
(a) its diagonal entries are positive,
(b) its off-diagonal entries are negative,
and
(c) the sum of elements in each column is positive.
Thus, CL-matrices are subclasses of both L-matrices and column strictly diagonally dominant matrices.
By the Levy-Desplanques Theorem, every CL-matrix is invertible.

We first argue that the Jacobian matrix of $g^{(n)}$ is a CL-matrix at all points.
Indeed, simple algebraic calculations show that for every $i \in \{1,2,\cdots,d\}$,
\begin{eqnarray}
\frac{\partial g^{(n)}_i}{\partial x_i}(x) &=& 1 + \frac{n\exp(nx_i)\left( \sum_{k \neq i} \exp(nx_k)\right)}{\left(\sum_{k=1}^d \exp(nx_k)\right)^2} > 0,\\
\frac{\partial g^{(n)}_i}{\partial x_j}(x) &=& -\frac{n\exp(n (x_i+x_j))}{\left(\sum_{k=1}^d \exp(nx_k)\right)^2} < 0, \ \ \ \forall j \neq i.
\end{eqnarray}
In particular, Conditions~(a) and~(b) hold for the Jacobian matrix of $g^{(n)}$ at every point $x$.
We also have
\[ \sum_{i=1}^d g^{(n)}_{i}(x) = 1 + \sum_{i=1}^d x_i, \]
and therefore
\[ \sum_{i=1}^d\frac{\partial g^{(n)}_i}{\partial x_j}(x) = 1 > 0, \ \ \ \forall j \in \{1,2,\ldots,d\}, \]
so that Condition~(c) holds as well, and the Jacobian matrix is a CL-matrix at all points.
It follows that the Jacobian matrix is invertible at all points,
hence $g^{(n)}$ is an immersion.

\bigskip

\noindent\textbf{Step 2:} \emph{The function $g^{(n)}$ is onto.}

To prove that $g^{(n)}$ is onto we will show that its image is both open and closed.
Since the Jacobian matrix of $g^{(n)}$ at every point $x$ is invertible,
by the Open Mapping Theorem the image of $g^{(n)}$ is an open set.
To show that the image of $g^{(n)}$ is closed,
note that $\|x-g^{(n)}(x)\|_2 \leq 1$ for every $x \in \dR^d$,
and consider a sequence $(y^{k})_{k \in \dN}$ of points in the image of $g$ that converges to a point $y$.
For each $k \in \dN$ let $x^{k} \in \dR^d$ satisfy $y^{k} = g^{(n)}(x^{k})$.
Since $\| x^{k} - y^{k}\|_2 \leq 1$,
and since the sequence $(y^{k})_{k \in \dN}$ converges,
it follows that there is a subsequence $(x^{k_l})_{l \in \dN}$ that converges to a limit $x$.
Since the function $g^{(n)}$ is continuous, $g^{(n)}(x) = y$, so that $y$ is in the image of $g^{(n)}$,
which implies that the image of $g^{(n)}$ is closed.

\bigskip

\noindent\textbf{Step 3:} \emph{The function $g^{(n)}$ is one-to-one.}

We argue that any function whose Jacobian matrix is a CL-matrix is one-to-one.
Indeed, let $f$ be such a function, assume w.l.o.g.~that $f(\vec 0) = \vec 0$,
and fix $x \neq \vec 0$.
We will show that $f(x) \neq \vec 0$.
We have
\[ f(x) = f(0) + \int_{t=0}^1 df_{tx} \cdot x \rmd t = \left( \int_{t=0}^1 df_{tx} \rmd t \right) \cdot x. \]
The matrix $\int_{t=0}^1 df_{tx} \rmd t$, as an integral of CL-matrices,
is a CL-matrix, hence invertible.
In particular, $f(x) = \left( \int_{t=0}^1 df_{tx} \rmd t \right) \cdot x \neq \vec 0$,
as claimed.
\end{proof}

We are now ready to prove Theorem~\ref{theorem:1}.

\bigskip

\begin{proof}[Proof of Theorem~\ref{theorem:1}]
Kohlberg and Mertens (1986) provided an equivalent representation to games.
Let $u : A \to \dR^I$ be a payoff function.
For every $i \in I$ define two functions $\widetilde u_i : A \to \dR$ and $\overline u_i : A_i \to \dR$ by
\begin{eqnarray}
\overline u_i(a_i) &:=& \frac{1}{|A_{-i}|} \sum_{a_{-i} \in A_{-i}} u_i(a_i,a_{-i}),\\
\widetilde u_i(a) &:=& u_i(a) -\overline u_i(a_i).
\end{eqnarray}
We denote this representation by $u = \langle \widetilde u,\overline u \rangle$.
Since $u_i(a) = \widetilde u_i(a) + \overline u_i(a_i)$,
this representation is one-to-one and onto.

Fix $n > 0$ and define a function $z_n : M_n\to \dR^{\cup_{i \in I} A_i}$ by
\begin{equation*}
z_{n,i,a_i}(u,x) := u_i(a_i,x_{-i}) + \frac{\exp(nu_i(a_i,x_{-i}))}{\sum_{j \in I}\exp(nu_j(a_j,x_{-j}))}, \ \ \ \forall i\in I, a_i\in A_i.
\end{equation*}
Define now a function $\varphi_n : M_n \to \dR^{A \times I}$ by
\begin{equation}
\label{equ:62}
\varphi_n(u,x) := \langle \widetilde u, z_n(u,x) \rangle.
\end{equation}
Lemma~\ref{lemma:g} implies that the function $\varphi_n$ is one-to-one, onto, and an immersion.
The result follows.
\end{proof}

\bigskip

We now prove that the inverse of $g^{(n)}$ converges uniformly as $n$ goes to infinity,
and we provide an explicit form to the limit function, which is nothing but the homeomorphism defined by Kohlberg and Mertens (1986).

\begin{lemma}
\label{lemma:g2}
For every $n > 0$ let $h^{(n)} : \dR^d \to \dR^d$ be the inverse of $g^{(n)}$.
Let $h : \dR^d \to \dR^d$ be the function defined by
\[ h_i(y) := \min\{y_i,\alpha^*\}, \ \ \ \forall i=1,2,\cdots,d, \]
where $\alpha^* := \max\left\{ \alpha \in \dR \colon \sum_{i=1}^d(y_i-\alpha)_+ = 1\right\}$.
Then the sequence of functions $(h^{(n)})_{n > 0}$ converges uniformly to the function $h$.
\end{lemma}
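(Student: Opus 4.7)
The plan is to parameterize both $h^{(n)}$ and $h$ by a common change of variables, then estimate their difference uniformly in $y$. Write $p^{(n)}:=y-h^{(n)}(y)$ and $p^*:=y-h(y)$. The vector $p^{(n)}$ has components $p^{(n)}_i=e^{nh^{(n)}_i(y)}/\sum_j e^{nh^{(n)}_j(y)}$ (this is precisely the defining equation $g^{(n)}(h^{(n)}(y))=y$), and $p^*_i=(y_i-\alpha^*)_+$ by construction, so both lie in the unit simplex. Taking logarithms in the softmax identity and substituting $h^{(n)}_i(y)=y_i-p^{(n)}_i$ shows that $y_i-p^{(n)}_i-\tfrac{1}{n}\log p^{(n)}_i$ is independent of $i$. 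Calling this common value $\alpha_n$ and defining the strictly increasing bijection $f_n(p):=p+\tfrac{1}{n}\log p$ from $(0,\infty)$ onto $\dR$, I obtain $p^{(n)}_i=f_n^{-1}(y_i-\alpha_n)$ with $\alpha_n$ determined by $\sum_i p^{(n)}_i=1$. The analogous characterization for $h$ reads $p^*_i=f_\infty^{-1}(y_i-\alpha^*)$ with $f_\infty^{-1}(z):=z_+$, so the task reduces to comparing $f_n^{-1}$ with $f_\infty^{-1}$ and showing $\alpha_n\to\alpha^*$.

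The technical core is uniform convergence $f_n^{-1}(z)\to z_+$ on any half-line $z\le M$. For $z\le 0$, monotonicity of $f_n^{-1}$ reduces this to controlling $f_n^{-1}(0)$, the positive solution of $np=-\log p$, which one checks to satisfy $p=O(\log n/n)$. For $z\in(0,M]$, rearranging gives $f_n^{-1}(z)-z=-\tfrac{1}{n}\log f_n^{-1}(z)$; together with the bound $f_n^{-1}(z)\ge f_n^{-1}(0)$ this yields $|f_n^{-1}(z)-z|\le\tfrac{1}{n}\max(|\log f_n^{-1}(0)|,|\log M|)=O(\log n/n)$. Thus $\sup_{z\le M}|f_n^{-1}(z)-z_+|=O(\log n/n)$ for every fixed $M$.

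Next I upgrade this to uniform convergence $\alpha_n\to\alpha^*$ in $y$. Since $f_n^{-1}$ and $(\cdot)_+$ are increasing, the coordinate $i^*$ maximizing $y_i$ also maximizes $p^{(n)}_i$ and $p^*_i$, so each of these maxima is at least $1/d$; combining with $p^{(n)}_{i^*}<1$ in the relation $y_{i^*}-\alpha_n=f_n(p^{(n)}_{i^*})$, and analogously for $\alpha^*$, gives $\alpha_n,\alpha^*\in[\max_i y_i-1,\max_i y_i-\tfrac{1}{d}+\tfrac{\log d}{n}]$ once $n$ is large, so every relevant argument $y_i-\alpha$ lies in $(-\infty,1]$. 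Summing the bound from the previous paragraph over $i$ yields $|G_n(\alpha)-G_\infty(\alpha)|\le d\cdot O(\log n/n)$ uniformly in $\alpha$ and $y$, where $G_n(\alpha):=\sum_i f_n^{-1}(y_i-\alpha)$ and $G_\infty(\alpha):=\sum_i(y_i-\alpha)_+$. The function $G_\infty$ is piecewise linear with slope $-|\{i:y_i>\alpha\}|\le-1$ throughout $[\max_i y_i-1,\max_i y_i)$, which contains both $\alpha_n$ and $\alpha^*$. Combined with $G_n(\alpha_n)=1=G_\infty(\alpha^*)$, the slope bound gives
\[
|\alpha_n-\alpha^*|\le|G_\infty(\alpha_n)-G_\infty(\alpha^*)|=|G_\infty(\alpha_n)-G_n(\alpha_n)|=O(\log n/n).
\]

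Finally, the triangle inequality together with the $1$-Lipschitz continuity of $z\mapsto z_+$ gives $|p^{(n)}_i-p^*_i|\le|f_n^{-1}(y_i-\alpha_n)-(y_i-\alpha_n)_+|+|\alpha_n-\alpha^*|=O(\log n/n)$, uniformly in $i$ and $y$; since $h^{(n)}(y)-h(y)=p^*-p^{(n)}$, uniform convergence of $h^{(n)}$ to $h$ follows. The main obstacle, I expect, is the analysis of $f_n^{-1}$ near $z=0$, exactly where the limit $f_\infty^{-1}$ has a corner and where $f_n^{-1}(0)$ decays only at the logarithmic rate $O(\log n/n)$; the a priori pinning of $\alpha_n$ and $\alpha^*$ to a bounded window relative to $\max_i y_i$ in step three is what promotes uniformity from arguments in a fixed compact set to the unbounded variable $y$.
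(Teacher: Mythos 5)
Your proof is correct, but it takes a genuinely different route from the paper's. The paper works directly with the gap vectors $\alpha_i^{(n)} := y_i - h_i^{(n)}(y)$ and $\alpha_i := y_i - h_i(y)$ (your $p^{(n)}$ and $p^*$): after ordering the coordinates of $y$ it proves the one-sided bound $\alpha_i^{(n)} < \alpha_i + \ep$ by contradiction --- if some gap exceeded $\alpha_i+\ep$, the corresponding coordinate of $h^{(n)}(y)$ would lie at least $\ep$ below the largest coordinate, forcing its softmax weight below $1/(1+\exp(\ep n)) < \ep$ --- and then upgrades this to a two-sided bound using only the normalization $\sum_i \alpha_i^{(n)} = \sum_i \alpha_i = 1$. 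You instead factor both $h^{(n)}$ and $h$ through the scalar function $f_n(p) = p + \tfrac1n\log p$ and a Lagrange multiplier $\alpha_n$, and reduce the lemma to two one-dimensional statements: uniform convergence $f_n^{-1}(z)\to z_+$ on half-lines, and $\alpha_n\to\alpha^*$ via the slope bound on the water-filling function $G_\infty$. Your version is longer and its delicate point is the corner of $f_n^{-1}$ at $z=0$, but it is more quantitative, delivering an explicit rate $O(\log n/n)$ uniform in $y$, which the paper's soft $\ep$--$N$ argument leaves implicit (though its choice of $n$ encodes essentially the same rate). Both proofs ultimately rest on the same two facts: the gap vectors lie on the unit simplex, and softmax weights of non-maximal coordinates decay exponentially in $n$.
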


\begin{proof}
Fix $\ep > 0$, and let $n > 0$ be sufficiently large so that $\ep > 1/(1+\exp(\ep n))$.
Fix $y \in \dR^d$ and define $x := h(y)$ and $x^{(n)} := h^{(n)}(y)$.
Assume w.l.o.g.~that $y_1 \leq y_2 \leq \cdots \leq y_d$.
By the definition of $g^{(n)}$ we have $x^{(n)}_1 \leq x^{(n)}_2 \leq\cdots \leq x^{(n)}_d$.
By the definition of $h$ we have $x_1 \leq x_2 \leq \cdots \leq x_d$.
Since
\[ \sum_{i=1}^d (y_i - \alpha^*)_+ = 1 = \sum_{i=1}^d (y_i-x^{(n)}_i) = \sum_{i=1}^d (y_i-x^{(n)}_i)_+, \]
and since $x^{(n)}_1 \leq x^{(n)}_2 \leq\cdots \leq x^{(n)}_d$,
it follows that $x^{(n)}_d \geq \alpha^* = x_d$.

For every $i \in \{1,2,\ldots,d\}$ denote
\[ \alpha_i := y_i - x_i \geq 0, \]
and
\[ \alpha^{(n)}_i := y_i - x^{(n)}_i \geq 0. \]
We now claim that $\alpha_i^{(n)} < \alpha_i + \ep$.
Indeed, assume to the contrary that for some $i \in \{1,2,\ldots,d\}$ we have $\alpha_i^{(n)} \geq \alpha_i + \ep$.
Then in particular
\[ x^{(n)}_i = y_i - \alpha^{(n)}_i \leq y_i - \alpha_i - \ep = x_i - \ep \leq x_d - \ep \leq x_d^{(n)}-\ep. \]
Therefore, by the definition of $g^{(n)}$,
\begin{eqnarray*}
\ep &\leq& \alpha_i^{(n)}
= \frac{\exp(nx_i^{(n)})}{\sum_{j=1}^d\exp(nx_j^{(n)})}\\
&\leq& \frac{\exp(nx_i^{(n)})}{\exp(nx_i^{(n)} + nx_d^{(n)})}\\
&=& \frac{1}{1+\exp\bigl( n(x^{(n)}_d - x^{(n)}_i)\bigr)}
\leq \frac{1}{1+\exp(\ep n)},
\end{eqnarray*}
a contradiction to the choice of $n$.
Since $\sum_{i=1}^d \alpha_i^{(n)} = 1 = \sum_{i=1}^d \alpha_i$,
we deduce that for every $i \in \{1,2,\ldots,d\}$ we have
\[ \alpha_i -d\ep < \alpha_i^{(n)} < \alpha_i + \ep, \]
which implies that $\|h^{(n)}(y) - h(y)\|_\infty \leq d\ep$, and the desired result follows.
\end{proof}

\bigskip

\begin{proof}[Proof of Theorem~\ref{theorem:2}]
In the proof of Kohlberg and Mertens (1986, Theorem~1) it was shown that the following function $\varphi : M \to \dR^{A \times I}$ is a homeomorphism:
\[ \varphi(u,x) := \langle \widetilde u,z(u,x)\rangle, \ \ \ \forall (u,x) \in M, \]
where notations follow the proof of Theorem~\ref{theorem:1} and
\begin{equation*}
z_{i,a_i}(u,x) := u_i(a_i,x_{-i}) + x_i(a_i), \ \ \ \forall i \in I, a_i \in A_i.
\end{equation*}
Theorem~\ref{theorem:2} follows from Lemma~\ref{lemma:g2}.
\end{proof}

\end{document}